\newtheorem{lemma}{Lemma}[section]
\newtheorem{proposition}[lemma]{Proposition}
\newtheorem{theorem}[lemma]{Theorem}
\newtheorem{corollary}[lemma]{Corollary}
\newcommand{\BB}{\mathbb B}
\newcommand{\CC}{\mathbb C}
\newcommand{\HH}{\mathbb H}
\newcommand{\NN}{\mathbb N}
\newcommand{\QQ}{\mathbb Q}
\newcommand{\RR}{\mathbb R}
\newcommand{\TT}{\mathbb T}
\newcommand{\ZZ}{\mathbb Z}
\newcommand{\cL}{\mathcal L}
\newcommand{\To}{\longrightarrow}
\newcommand{\tensor}{\otimes}
\renewcommand{\Bar}{\overline}
\newcommand{\imic}{\cong}
\newcommand{\half}{{\frac{1}{2}}}
\newcommand{\PSL}{\mathop{\mathrm {PSL}}\nolimits}
\newcommand{\Sp}{\mathop{\mathrm {Sp}}\nolimits}
\newcommand{\SU}{\mathop{\mathrm {SU}}\nolimits}
\newcommand{\Sy}{\mathop{\null\mathrm {S}}\nolimits}
\newcommand{\Ut}{\mathop{\null\mathrm {U}}\nolimits}
\newcommand{\Aut}{\mathop{\mathrm {Aut}}\nolimits}
\newcommand{\Ker}{\mathop{\mathrm {Ker}}\nolimits}
\newcommand{\Lie}{\mathop{\mathrm {Lie}}\nolimits}
\newcommand{\Span}{\mathop{\mathrm {Span}}\nolimits}
\newcommand{\Ad}{\mathop{\mathrm {Ad}}\nolimits}
\newcommand{\codim}{\mathop{\mathrm {codim}}\nolimits}
\renewcommand{\Im}{\mathop{\mathrm {Im}}\nolimits}
\newcommand{\Id}{\mathop{\mathrm {Id}}\nolimits}
\newcommand{\rk}{\mathop{\mathrm {rk}}\nolimits}
\newcommand{\id}{\mathop{\mathrm {id}}\nolimits}
\newcommand{\ab}{\mathop{\mathrm {ab}}\nolimits}
\newcommand{\bde}{\mathbf e}
\newcommand{\qedsymbol}{\mbox{$\Box$}}
\newcommand{\qed}{\unskip\nobreak\hfil\penalty50\hskip1em\hbox{}\nobreak
\hfill\qedsymbol\parfillskip=0pt\finalhyphendemerits=0}
\newenvironment{proof}{\begin{ProofwCaption}{Proof}}{\end{ProofwCaption}}
\newenvironment{ProofwCaption}[1]
{\addvspace\theorempreskipamount \noindent{\it #1.}\rm}
{\qed \par \addvspace\theorempostskipamount}
\newcommand{\GmU}{\Upsilon}
\newcommand{\GmL}{\Lambda}
\newcommand{\GFix}{\Phi}
\newcommand{\DGmStor}{(D/\Gamma)'_\Sigma}
\newcommand{\ParGm}{\mathop{\rm MPar}\nolimits_\Gamma}
\newcommand{\SmQ}{{\Sigma(Q)}}
\newcommand{\SmP}{{\Sigma(P)}}
\newcommand{\fA}{{\bf R}}
\newcommand{\fG}{{\bf G}}
\newcommand{\fH}{{\bf H}}
\newcommand{\fL}{{\bf L}}
\newcommand{\comGm}{[\Gamma,\Gamma]}
\newcommand{\Fix}{{\rm Fix}}
\newcommand{\Nu}{{\bf N}}
\begin{document}

\title{Fundamental groups of toroidal compactifications}
\author{A.K. Kasparian\thanks{Research partially supported by Contract
    015/9.04.2014 with the the Scientific Foundation of Kliment
    Ohridski University of Sofia.}\\
Faculty of Mathematics and Informatics\\
Kliment Ohridski University of Sofia\\
5 James Bouchier blvd.,
1164 Sofia,
Bulgaria
\and G.K. Sankaran\\Department of Mathematical Sciences\\University of
Bath, Bath BA2 7AY, UK}
\date{      }

\maketitle

\begin{abstract}
We compute the fundamental group of a toroidal compactification of a
Hermitian locally symmetric space $D/\Gamma$, without assuming either
that $\Gamma$ is neat or that it is arithmetic. We also give bounds
for the first Betti number.
\end{abstract}

Many important complex algebraic varieties can be described as locally
symmetric varieties. Examples include modular curves $\HH/\Gamma$,
where $\HH$ is the upper half-plane and $\Gamma < \PSL(2, \ZZ)$;
classifying spaces for Hodge structures or (in cases where a Torelli
theorem holds) moduli spaces of polarised varieties, such as moduli of
abelian varieties and of K3 surfaces; and special surfaces, such as
Hilbert modular surfaces.

Locally symmetric varieties are in general non-compact, and we want to
be able to compactify them and to study the geometry of the
compactifications, especially the birational geometry, which does not
depend on the choice of compactification. We work with toroidal
compactifications as described in~\cite{AMRT}.

Two basic birational invariants of a compact complex manifold $X$ are
the Kodaira dimension $\kappa(X)$ and the fundamental group
$\pi_1(X)$.  There is an extensive literature on computing Kodaira
dimensions of specific locally symmetric varieties, which is usually
very difficult.

Computing the fundamental group is easier, but there are some gaps in
the literature which we aim to fill.  We study the fundamental group
of a toroidal compactification $\DGmStor$ of a non-compact, not
necessarily arithmetic quotient $D/\Gamma$ by a lattice $\Gamma$. In
general this is not a manifold, but it is normal and can be chosen to
have only quotient singularities. By~\cite[Sect~7]{Kollar} these do
not affect the fundamental group.

The main result of the article is Theorem~\ref{thm:fundamentalgroup},
describing $\pi_1(\DGmStor)$ as a quotient of the lattice $\Gamma$.

\section*{Acknowledgements} The first author is grateful to a referee for
pointing out a flaw in a previous version and for bringing to her
attention several references. We also thank Klaus Hulek and Xiuping Su
for useful remarks, and Maximilian R\"ossler for correcting an error.

\section{Background}\label{sect:background}

In this section we explain the background to the problem and establish
some terminology and notation.  A \emph{symmetric space} of
non-compact type is a quotient $D = G/K$ of a connected non-compact
semisimple Lie group $G$, assumed to be the real points of a linear
algebraic group defined over $\QQ$, by a connected maximal compact
subgroup $K$ of $G$.  If the centre of $K$ is not discrete, then $D$
carries a Hermitian structure and hence the structure of a complex
manifold, in fact a K\"ahler
manifold~\cite[Theorem~VIII.6.1.]{Helgason}.

By a \emph{lattice} in $G$ we mean a discrete subgroup of $G$ of
finite covolume with respect to Haar measure. A lattice $\Gamma$ is
said to be \emph{arithmetic} if $\Gamma\cap G(\ZZ)$ is of finite index
in both $\Gamma$ and $G(\ZZ)$. It is said to be \emph{neat} if the
subgroup of $\CC^*$ generated by all eigenvalues of elements of
$\Gamma$ is torsion free.

A \emph{locally symmetric variety} is the quotient of a Hermitian
symmetric space $D$ by a lattice $\Gamma < G$. If $D/\Gamma$ is
compact then $\Gamma$ is said to be \emph{cocompact} or
\emph{uniform}. Non-uniform lattices are very common, however, and it
is this case that we are concerned with. By~\cite{BailyBorel} (for
$\Gamma$ arithmetic) and~\cite{Mok} these quotients are always
algebraic varieties, not just complex analytic spaces.

Toroidal compactifications of arithmetic quotients of
Hermitian symmetric spaces are constructed and described
in~\cite{AMRT}. Margulis~\cite{Margulis} showed that an irreducible
lattice $\Gamma$ in a semisimple Lie group $G$ is necessarily
arithmetic if $G$ has real rank $\rk_\RR G > 1$, and Garland and
Raghunathan~\cite{GR} had earlier considered the case of $\rk_\RR G=1$
and constructed a fundamental domain for the action of any lattice
$\Gamma$ on $D=G/K$.

A parabolic subgroup $Q<G$ is said to be \emph{$\Gamma$-rational}, for
$\Gamma$ a lattice (arithmetic or not) in $G$, if $\Gamma\cap N_Q$ is
a lattice in the unipotent radical $N_Q$ of $Q$. An analytic boundary
component $F$ of $D$ is said to be a $\Gamma$-rational boundary
component if its stabiliser is a $\Gamma$-rational parabolic
subgroup. The construction in~\cite{GR} shows that $D/ \Gamma$ can
be compactified by adjoining the $\Gamma$-quotients of the
$\Gamma$-rational boundary components, and this was later used by
Mok~\cite{Mok} to extend the construction of~\cite{AMRT} to the case
of non-arithmetic quotients.

The only Hermitian symmetric spaces whose automorphism groups have
real rank~$1$ are the complex balls $\BB^n = \SU(n,1)/\Sy(\Ut(n)\times
\Ut(1))$, so Mok's generalisation is needed only for ball quotients,
but they are of great interest. The construction of~\cite{AMRT}
becomes much simpler when $\rk_\RR G=1$, as we explain below.

An extensive reference on toroidal and other compactifications of
locally symmetric spaces $D/\Gamma$ is the monograph \cite{BorelJi} of
Borel and Ji.

The first results on the fundamental group of a smooth
compactification of a locally symmetric space concerned Siegel modular
$3$-folds, where $G=\Sp(2, \RR)$.  These are moduli spaces of abelian
surfaces and some such cases were studied in \cite{HeidrichKnoller},
in \cite{Knoller} and in \cite{HulekSankaran}.  More generally, the
fundamental group of a toroidal compactification of an arbitrary
Hermitian locally symmetric variety is studied in~\cite{Sankaran}.

For simplicity, we assume throughout this paper that $D$ is
irreducible: that is, that $G$ does not decompose into a product of
factors.

Denote by $\ParGm$ the set of $\Gamma$-rational maximal parabolic
subgroups of $G$. It is shown in~\cite{Sankaran} that if $\Gamma<G$ is a neat
arithmetic non-uniform lattice, then a toroidal
compactification $\DGmStor$ satisfies $\pi_1 \DGmStor = \Gamma/\GmU$,
where $\GmU$ is the subgroup of $\Gamma$ generated by the centres of
the unipotent radicals of all $Q\in\ParGm$. Moreover, in~\cite{GHS} it
is shown that there is a surjective group homomorphism $\Gamma \to
\pi_1(\DGmStor)$, whose kernel contains all $\gamma \in \Gamma$ with a
fixed point on $D$.

If $Q\in \ParGm$ then we denote the unipotent radical of $Q$ by
$N_Q$, the centre of $N_Q$ by $U_Q$ and the solvable radical of $Q$ by
$R_Q$. Let $\GmL$ be the subgroup of $\Gamma$ generated by all $\gamma
\in \Gamma \cap Q$ (for some $Q\in\ParGm$) for which some power
$\gamma^k \in R_Q$, and if $\gamma^k\in N_Q\subset R_Q$ then
$\gamma^k\in U_Q$. From this definition, $\GmL$ is normal in~$\Gamma$.

Our main result, Theorem~\ref{thm:fundamentalgroup}, is that
$\pi_1(\DGmStor) = \Gamma/\GmL \GmU$. This is a much more precise
version of a result of the second author from~\cite{Sankaran}, stated
here as Theorem~\ref{thm:Sankaran}. It is also slightly more general
than the results of~\cite{Sankaran} because, in the light
of~\cite{Mok}, we may now also allow non-arithmetic lattices.

Here is a synopsis of the paper.  Section~\ref{sect:parabolics}
introduces some notation and terminology and describes the structure
of maximal $\Gamma$-rational parabolic subgroups, largely
following~\cite{BorelJi}.  Section~\ref{sect:toroidal} describes the
toroidal compactifications $\DGmStor$ and their coverings
$(D/\Gamma_o)'_\Sigma$ for normal subgroups (not necessarily lattices)
$\Gamma_o \vartriangleleft \Gamma$ containing $\GmU$.
Section~\ref{sect:pi1,H1} comprises the main results of the article.

We show in Proposition~\ref{prop:interiorcompanionfixedpoints} that
any element $\gamma \GmU \in \Gamma/\GmU$ with a fixed point on
$(D/\GmU)'_\Sigma$ has a representative $\gamma \in \Gamma \cap Q$
with $\gamma ^k \in R_Q$ for some $Q \in \ParGm$ and $k \in \NN$, and
that $\gamma^k\in U_Q$ if $\gamma^k\in N_Q$. This suffices to prove
Theorem~\ref{thm:fundamentalgroup}, and from that we deduce bounds on
the first Betti numbers in Subsection~\ref{subsect:H1}.

\section{Parabolic subgroups}\label{sect:parabolics}

We collect here some properties of Hermitian symmetric spaces $D =
G/K$ of non-compact type and maximal parabolic subgroups $Q$ of
$G$. For more details see \cite[Chapter~1]{BorelJi}.

\subsection{Langlands decomposition of a parabolic
  subgroup}\label{subsect:langlands}

Any parabolic subgroup $Q$ of $G$ has a \emph{Langlands
  decomposition}~\cite[Equation~(I.1.10)]{BorelJi}
\[
Q = N_Q A_Q M_Q
\]
where $N_Q$ is the \emph{unipotent radical} of $Q$. We write $L_Q =
A_Q\times M_Q$, the \emph{Levi subgroup} of $Q$, and $R_Q = N_Q \rtimes A_Q$,
the \emph{solvable radical} of $Q$. The subgroup $A_Q$ is called the
\emph{split component} of $Q$, and $M_Q$ is a semisimple complement of
$R_Q$. All these groups are uniquely defined once we choose a maximal
compact subgroup $K$ of $G$.

We can refine this further if we assume, as we henceforth do, that $Q$
is a maximal parabolic subgroup of $G$. We denote by $U_Q$ the centre
of the unipotent radical $N_Q$ of $Q$. Since $N_Q$ is a $2$-step
nilpotent group, {i.e.} $[[N_Q, N_Q], N_Q] = 0$, we have $U_Q=[N_Q,
  N_Q]$, the commutator subgroup. We may identify $U_Q$ with its Lie
algebra $\Lie(U_Q)\imic \RR^m$, for $m = \dim_\RR U_Q$. The quotient
$V_Q = N_Q/U_Q$ is also an abelian group, naturally isomorphic to
$\CC^n$~\cite[(III.7.10)]{BorelJi} and $N_Q = U_Q \rtimes V_Q$ is a
semi-direct product of $U_Q$ and $V_Q$.

The semi-simple complement $M_Q$ of the solvable radical $R_Q$ of $Q$
is a product $M_Q = G'_{Q,l} \times G_{Q,h}$ of semisimple groups
$G'_{Q,l}$, $G_{Q,h}$ of noncompact type~\cite[(III.7.8)]{BorelJi}.

This gives us the \emph{refined Langlands decomposition}
\begin{equation}\label{eq:LangDecomp}
  Q = (U_Q \rtimes V_Q)\rtimes (A_Q \times G'_{Q,l} \times G_{Q,h})
\end{equation}
of an arbitrary maximal parabolic subgroup $Q$ of $G$. Note also that $G=QK$.

Let us describe briefly the group laws on $N_Q$ and $Q$. For this, and
later, it will be convenient to use superfix notation for conjugation:
$g^h=hgh^{-1}$. However, the letters $j$ and $k$ will always denote
integers, so $g^k$ means the $k$-th power of $g$.

There is an $\RR$-linear embedding $\eta \colon \Lie(V_Q) \to
\Lie(N_Q)$, whose image is the orthogonal complement of $\Lie(U_Q)$
with respect to the Killing form. The group law in $N_Q$ is
\begin{equation}\label{eq:GroupLawNQ}
  \exp(u_1+\eta(v_1))\exp(u_2+\eta(v_2)) =
  \exp \Big((u_1+u_2)+\eta(v_1+v_2)+\half[\eta(v_1),\eta(v_2)]\Big).
\end{equation}
For the group law in $Q$ we write $q_j=(n_j,l_j)\in Q = N_Q\rtimes
L_Q$, further decomposed as $l_j = (a_j,g_j,h_j)\in A_Q \times
G'_{Q,l} \times G_{Q,h}$ and $n_j = \exp(u_j+\eta(v_j))$ with $u_j \in
\Lie(U_Q)$ and $v_j \in \Lie(V_Q)$. Then
\begin{equation}\label{eq:GroupLawQ}
q_1q_2 = (n_1n_2^{l_1}, l_1l_2) = \Big(\exp\big(u+\eta(v)\big), a_1a_2, g_1g_2, h_1h_2\Big),
\end{equation}
where $u = u_1+u_2^{a_1g_1}+\half[\eta(v_1),\eta(v_2^{l_1})]$ and $v = v_1+v_2^{l_1}$.

Since $Q$ is maximal and $D$ is irreducible, the group $A_Q\imic (\RR_{>0}, .)$ is a
$1$-dimensional real torus of $G$ (see, for
instance,~\cite[Section~I.1.10]{BorelJi}).

The symmetric space $D$ has an embedding in a space $\check{D}$, the
compact dual, on which $G$ acts. The topological boundary of $D$ then
decomposes into complex analytic boundary components corresponding to
parabolic subgroups $Q$: namely, $Q$ is the normaliser of the boundary
component~$F(Q)$. See \cite[Proposition~I.5.28]{BorelJi} or
\cite[Proposition III.3.9.]{AMRT} for details.

If $F(P)\subseteq \Bar{F(Q)}$ then $U(P)\supseteq U(Q)$
by~\cite[Theorem~III.4.8(i)]{AMRT}.

\subsection{Horospherical decomposition}\label{subsect:refhoro}

For any parabolic subgroup we have $G = QK$~\cite[(I.1.20)]{BorelJi},
so $Q$ acts transitively on $D$; moreover, $Q \cap K = M_Q \cap K$. As
a result, the refined Langlands decomposition \eqref{eq:LangDecomp} of
$Q$ induces the \emph{refined horospherical decomposition}
\begin{equation}\label{eq:HorosphereDecomp}
  D = U_Q \times V_Q \times A_Q \times D'_{Q,l} \times D_{Q,h}
\end{equation}
of $D$ with $D'_{Q,l} = G'_{Q,l}/G'_{Q,l} \cap K$ and $D_{Q,h} =
G_{Q,h}/G_{Q,h} \cap K$: see~\cite[Lemma III.7.9]{BorelJi} and the
discussion there. The equality in \eqref{eq:HorosphereDecomp} is a
real analytic diffeomorphism.  The factors $D_{Q,h}\imic F(Q)$ and
$D'_{Q,l}$ are respectively Hermitian and Riemannian symmetric spaces
of noncompact type.

The group law in $Q$ induces a $Q$-action on the corresponding
horospherical decomposition.  If
$(n,l)=(\exp(u+\eta(v)),a,g,h) \in Q = N_Q \rtimes L_Q$
and $y=(n_y, x_y)= (u_y,v_y,a_y,z_y',z_y) \in D$ then
\begin{eqnarray*}\label{eq:HorosphericalAction}
  (n,l) (y) &=& (n n_y^l, lx_y)\\
  &=& \Big(\exp\big(u+ u_y^{ag}+\half[\eta(v),\eta(v_y^l)] + \eta(v+v_y^l)\big), a a_y, g z_y', hz_y\Big).
\end{eqnarray*}

\subsection{Siegel domains}\label{subsect:Siegeldomain}

In \cite{PyatetskiiShapiro} Pyatetskii-Shapiro realises the Hermitian
symmetric spaces $D = G/K$ of noncompact type as Siegel domains of
third kind.  These are families of open cones, parametrised by
products of complex Euclidean spaces and Hermitian symmetric spaces of
noncompact type.

In the refined horospherical decomposition \eqref{eq:HorosphereDecomp},
the $A_Q$-orbit
\begin{equation}\label{eq:coneasorbit}
 C_Q = A_Q D'_{Q,l} = \{ (a,z') \mid  a \in A_Q,  z'\in D'_{Q,l} \}
\end{equation}
of the Riemannian symmetric space $D'_{Q,l}$ is an open, strongly
convex cone in $U_Q \imic \RR^m$~\cite[Lemma III.7.7]{BorelJi}.  Note
that the reductive group $G_{Q,l} = A_Q \rtimes G'_{Q,l}$ acts
transitively on $C_Q$ since $C_Q = G_{Q,l}/G_{Q,l} \cap K =
G_{Q,l}/G'_{Q,l} \cap K$.

We embed $C_Q$ in the complexification $U_Q \otimes_\RR \CC \imic
(\CC^m, +)$ of $U_Q$ as a subset $i C_Q \subset iU_Q$ with pure
imaginary components.  Combining with \eqref{eq:HorosphereDecomp}, one
obtains a real analytic diffeomorphism of $D$ onto the product
\begin{equation}    \label{eq:SiegelDomain}
  (U_Q + i C_Q) \times V_Q \times D_{Q,h}.
\end{equation}
which will be called the Siegel domain realisation of $D$ associated
with $Q$. See~\cite{AMRT} for the relation between
\eqref{eq:SiegelDomain} and the classical Siegel domain presentation of
$D$.

In these coordinates, with notation as above, the action of $Q$ is given by
\begin{eqnarray}    \label{eq:DiffeomorphicSiegelDomainAction}
&&(u+\eta(v), a, g, h) (u_y + i(a_y, z_y'), v_y, z_y)\\
&&\qquad  =  \Big(u + u_y^{ag}+\half[\eta(v),\eta(v_y^l)] + i(aa_y, gz_y'), v + v_y^l, hz_y\Big)\nonumber
\end{eqnarray}
where $(u_y + i(a_y,z_y'),v_y,z_y) \in (U_Q + i C_Q) \times V_Q
\times D_{Q,h}$.

\section{Toroidal compactifications}\label{sect:toroidal}

We recall briefly enough detail on toroidal compactification for our
immediate purposes: for full details we refer to~\cite{AMRT}.

\subsection{Admissible fans and collections}\label{subsect:admissible}

Suppose that $Q\in\ParGm$: then $\GmU_Q =\Gamma \cap U_Q\imic\ZZ^m$ is
a lattice in $U_Q\imic \RR^m$.

We say that a closed polyhedral cone $\sigma \subset U_Q$ is
\emph{$\GmU_Q$-rational} if $\sigma=\RR_{\ge 0}u_1+\cdots+\RR_{\ge
  0}u_s$ for some $u_i\in\GmU_Q$.

A \emph{fan} (see~\cite{Fulton}) $\SmQ$ is a collection of closed
polyhedral cones in $U_Q$ such that any face of a cone in $\SmQ$ is
also in $\SmQ$ and any two cones in $\SmQ$ intersect in a common
face. It is \emph{$\GmU_Q$-rational} if all cones in $\SmQ$ are
$\GmU_Q$-rational.

The fan $\SmQ$ in $U_Q$ is said to be \emph{$\Gamma$-admissible} if it
is $\GmU_Q$-rational, it decomposes $C_Q$ (that is,
$C_Q\subseteq\bigcup_{\sigma\in\SmQ}\sigma$) and
$\Gamma_{Q,l}=\Gamma\cap G_{Q,l}$ acts on $\SmQ$ with only finitely
many orbits.

The lattice $\Gamma$ acts on $\ParGm$ by conjugation.  We say that a
family $\Sigma = \{\SmQ\}_{Q \in \ParGm}$ of $\Gamma$-admissible fans
$\SmQ$ is a \emph{$\Gamma$-admissible family} if:
\begin{itemize}
\item[(i)] $\gamma \SmQ = \Sigma(Q^\gamma)$, where $Q^\gamma=\gamma
  Q\gamma^{-1}$, for all
$\gamma \in \Gamma$ and $Q \in \ParGm$; and
\item[(ii)] $\SmQ = \{\sigma\cap U_Q\mid \sigma \in \SmP\}$ whenever
  $F(P) \subseteq \Bar{F(Q)}$, for $P,\, Q \in \ParGm$.
\end{itemize}

\subsection{Partial compactification at a cusp}\label{subsect:partcompact}

For $Q \in \ParGm$, the quotient $\TT(Q) = (U_Q \otimes_\RR
\CC)/\GmU_Q \imic (\CC^*)^m$ is an algebraic torus over $\CC$.

A $\Gamma$-admissible fan $\SmQ$ determines a toric variety $X_\SmQ$
that includes $\TT(Q)$ as a dense Zariski-open subset. More precisely,
$X_\SmQ$ is the disjoint union of all the quotients
$\TT(Q)_\sigma$ of $\TT(Q)$ by the complex algebraic tori $\Span_\CC
(\sigma) / (\Span_\CC (\sigma ) \cap \GmU_Q)$, corresponding to the  cones
$\sigma \in \SmQ$. In particular, the dense torus in $X_\SmQ$ is $\TT(Q)=\TT(Q)_{\{0\}}$.

Bearing in mind that $(U_Q + i C_Q)/\GmU_Q$ is an open subset of
$\TT(Q)$, we take the closure $\Bar{(U_Q + i C_Q)/\GmU_Q}$ of $(U_Q +
i C_Q)/\GmU_Q$ in $X_\SmQ$ and define $Y_\SmQ$ as the interior of
$\Bar{(U_Q + i C_Q)/\GmU_Q}$ in $X_\SmQ$. Here things are simpler in
the case of real rank~$1$: then $m=1$ and
$\Sigma(Q)=\{\{0\},\RR_{\geq 0}\}$ (there is no choice of fan to be made),
so $X_\SmQ = \CC$ and $Y _\SmQ$ is a disc.

The Siegel domain presentation~\eqref{eq:SiegelDomain} of $D$ associated
with $Q$ provides a real analytic diffeomorphism
\[
  D/\GmU_Q = (U_Q + i C_Q)/\GmU_Q \times V_Q \times D_{Q,h},
\]
and the $\Gamma$-admissible fan $\SmQ$ defines a partial
compactification
\begin{equation}   \label{eq:PartialCompletionParSubgr}
  (D/\GmU_Q)_\SmQ = Z_\SmQ = Y_\SmQ \times V_Q \times D_{Q,h}.
\end{equation}
By subdividing $\SmQ$ we may, and henceforth do, assume that $X_\SmQ$
and $Y_\SmQ$ are smooth: see~\cite{Fulton}.

To describe the $Q$-action on $Z_\SmQ$, consider the $\GmU_Q$-covering
map
\[
  \epsilon_Q\colon D = (U_Q + i C_Q) \times V_Q \times D_{Q,h}  \To
  D/\GmU_Q = (U_Q + i C_Q)/\GmU_Q \times V_Q \times D_{Q,h},
\]
given in the notation of \eqref{eq:coneasorbit} and
\eqref{eq:DiffeomorphicSiegelDomainAction} by
\[
 \epsilon_Q (u+ i  (a, \zeta'), \, v,  \, \zeta) = \left(\bde_Q(u+i
 (a, \zeta')), \,  v,  \, \zeta \right),
\]
where $\bde_Q\colon U_Q\tensor \CC \to \TT(Q)$ is the canonical map
with kernel $\GmU_Q$.  If we identify $\GmU_Q$ with $\ZZ^m$ then we
can identify $\bde_Q$ with exponentiation,
i.e.\ $\bde_Q(z_1,\ldots,z_m)=(e^{2\pi i z_1},\ldots,e^{2\pi i z_m})$
for $(z_1,\ldots,z_m)\in \CC^m$.

Identifying $N_Q$ with $\Lie(N_Q)$ and using \eqref{eq:DiffeomorphicSiegelDomainAction}, one expresses the action of
$\gamma=(u+\eta(v),a,g,h) \in \Lie(N_Q) \rtimes (A_Q \times G'_{Q,l} \times G_{Q,h})$ on
$y=(\bde_Q(u_y+i(a_y,z_y')),v_y,z_y) \in (U_Q + iC_Q)/\Upsilon_Q \times V_Q \times D_{Q,h}$ by
\begin{equation}\label{eq:quotientaction}
\gamma y
 = \Big(\bde_Q\big(u+ u_y^{ag} + \half[\eta(v),\eta(v_y^l)]+i(aa_y,gz_y')\big),
  v+ v_y^l,hz_y\Big).
\end{equation}
This $Q$-action extends by continuity to $Z_\SmQ$.

\subsection{The gluing maps}\label{subsect:gluingmaps}

For $P,\,Q\in\ParGm$ with $F(P)\subseteq \Bar{F(Q)}$, we are going to
describe explicitly the holomorphic map $\mu^Q_P\colon Z_\SmQ \To
Z_\SmP$ of \cite[Lemma~III.5.4]{AMRT}.

According to \cite[Theorem~III.4.8]{AMRT}, $U_Q$ is an $\RR$-linear
subspace of $U_P$. Therefore $\GmU_Q<\GmU_P$ and the identity map
\[
 \id_D\colon D = (U_Q + i C_Q) \times V_Q \times D_{Q,h}  \To D = (U_P
 + i C_P) \times V_P \times D_{P,h}
\]
induces a holomorphic covering
\[
\mu^Q_P\colon D/\GmU_Q = (U_Q + i C_Q)/\GmU_Q \times V_Q \times D_{Q,h}   \To
D/\GmU_P
\]
given by $\mu^Q_P (\GmU_Q x) = \GmU_P x$.

The inclusions $U_Q\tensor\CC \subset U_P\tensor\CC$ and $\GmU_Q <
\GmU_P$ induce a homomorphism $\mu^Q_{P,1} \colon \TT (Q) \To \TT(P)$,
which extends to $\mu^Q_{P,1}\colon X_\SmQ \To X_\SmP$, mapping
$Y_\SmQ$ into $Y_\SmP\subset\Bar{(U_P + i C_P)/\GmU_P}$. In this way,
one obtains a holomorphic gluing map
\[
 \mu^Q_P\colon Z_\SmQ = Y_\SmQ  \times V_Q \times D_{Q,h}   \To
 Y_\SmP  \times V_P \times D_{P,h} = Z_\SmP,
\]
given by
\[
 \mu^Q_P \left(\lim\limits_{t \to \infty} (y_t, v, z)\right) =
 \lim\limits_{t \to \infty} \mu^Q_P (y_t, v,z) = \lim\limits_{t \to
   \infty} (y_t + \GmU_P/\GmU_Q, v, z)
\]
where $y_t \in (U_Q + i C_Q)/\GmU_Q$ for $t\in\RR$ tends to some point
$\lim\limits_{t \to \infty} y_t \in Y_\SmQ$.  From this definition,
$\mu^Q_Q$ is the identity on $Z_\SmQ = (D/\GmU_Q)_\SmQ$, whether
$\Gamma$ is arithmetic or not. Again the real rank~$1$ case (in
particular, the non-arithmetic case) is simpler: then $F(Q)$ is a
point and there are no nontrivial inclusions $F(P)\subseteq
\Bar{F(Q)}$.

\subsection{Toroidal compactifications and coverings}\label{subsect:coverings}

We recall the construction of a toroidal compactification $\DGmStor$
of a locally symmetric variety $D/\Gamma$, associated with a
$\Gamma$-admissible family $\Sigma = \{\SmQ\}_{Q \in \ParGm}$ of fans
$\SmQ$ in $U_Q$.  In the notation of
subsection~\ref{subsect:partcompact}, consider the disjoint union
$\coprod_{Q \in \ParGm} Z_\SmQ$.

We denote by $\GmU$ the subgroup of $\Gamma$ generated by $\GmU_Q$ for
all $Q \in \ParGm$.  Suppose that $\Gamma_o$ is a normal subgroup of
$\Gamma$ containing $\GmU$. Its action on $D$ induces an equivalence
relation $\sim_{\Gamma_o}$ on $\coprod_{Q \in \ParGm}Z_\SmQ$, as in
the proof of \cite[Theorem~2.1]{Sankaran}: for $\Gamma_o=\Gamma$ it is
described in \cite[III.5]{AMRT}. Let $z_1\in Z_{\Sigma(Q_1)}$ and $z_2
\in Z_{\Sigma(Q_2)}$: then $z_1\sim_{\Gamma_o}z_2$ if there exist
$\gamma \in \Gamma_o$, $Q \in \ParGm$ and $z \in Z_\SmQ$, such that
$\Bar{F(Q)} \supseteq F(Q_1)$, $\Bar{F(Q)} \supseteq F(Q_2^\gamma)$,
$\mu_{Q_1}^Q (z) = z_1$ and $\mu^Q_{Q_2^\gamma} (z) = \gamma z_2$. In
the non-arithmetic case, $z_1 \sim _{\Gamma _o} z_2$ simply
reduces to the usual $\Gamma _o$-equivalence: $z_2 = \gamma z_1$ for
some $\gamma \in \Gamma _o$.

Then we put
\[
  (D/\Gamma_o)'_\Sigma = \left(\coprod_{Q \in \ParGm} Z_\SmQ \right)/\sim_{\Gamma_o}.
\]
In \cite{Sankaran} this is used to construct the
$(\Gamma/\GmU)$-Galois covering $(D/\GmU)'_\Sigma$ of $\DGmStor$ and
show that $(D/\GmU)'_\Sigma$ is a simply connected complex analytic
space. Notice that $\Gamma_o$ is not required to be a lattice, and
that $(D/\GmU)'_\Sigma$ is not compact.

In the proof of \cite[Theorem~1.5]{Sankaran} it is shown that
$Z_\SmQ$, which are diffeomorphic to $Y_\SmQ \times V_Q \times
D_{Q,h}$ for all $Q \in \ParGm$, are simply connected.  Further, the
proof of \cite[Theorem~2.1]{Sankaran} establishes that the natural
coverings $D/\GmU_Q \to D/\GmU$ extend to open holomorphic maps
$\pi^U_\SmQ\colon Z_\SmQ \to (D/\GmU)'_\Sigma$, which are
biholomorphic onto their images.

\section{The fundamental group and first Betti number}\label{sect:pi1,H1}

\subsection{The fundamental group}\label{subsect:pi1}

We begin by stating two theorems that summarise the results
from~\cite{Sankaran} and~\cite{GHS} on the fundamental group of a
toroidal compactification $\DGmStor$ of a quotient of $D=G/K$ by an
arithmetic lattice $\Gamma < G$.

\begin{theorem}\cite[Corollary~1.6, Theorem~2.1]{Sankaran}\label{thm:Sankaran}
 Let $D=G/K$ be a Hermitian
symmetric space and let $\Gamma$ be a non-uniform arithmetic lattice
in $G$.  Then the fundamental group $\pi_1(\DGmStor)$ of a toroidal
compactification $\DGmStor$ of $D/\Gamma$ is a quotient group of
$\Gamma/\GmU$.  In particular, if $\Gamma$ is a neat arithmetic
non-uniform lattice then $\pi_1(\DGmStor) = \Gamma/\GmU$.
\end{theorem}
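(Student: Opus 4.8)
The plan is to reduce Theorem~\ref{thm:Sankaran} to the two cited
sources, emphasising that no new argument is needed, only an assembly of
what is already proved in~\cite{Sankaran} and~\cite{GHS}. First I would
recall from Section~\ref{subsect:coverings} that the construction there
produces a complex analytic space $(D/\GmU)'_\Sigma$ together with a
properly discontinuous action of $\Gamma/\GmU$, whose quotient is
$\DGmStor$; by~\cite[Theorem~2.1]{Sankaran} the space
$(D/\GmU)'_\Sigma$ is simply connected. Hence $\DGmStor$ is the
quotient of a simply connected space by a group acting properly
discontinuously (though possibly with fixed points), and the standard
fact that for such a quotient $X=Y/H$ with $Y$ simply connected one has
$\pi_1(X)=H/N$, where $N\trianglelefteq H$ is the normal subgroup
generated by those $h\in H$ fixing a point of $Y$, gives a surjection
$\Gamma/\GmU\twoheadrightarrow\pi_1(\DGmStor)$. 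This is exactly the
content of~\cite[Corollary~1.6]{Sankaran}, so the first assertion of the
theorem is simply a citation.

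For the second assertion, the point is that neatness forces the group
$N$ above to be trivial, so the surjection becomes an isomorphism. The
argument I would give is: if $\Gamma$ is neat then no nontrivial element
of $\Gamma$ has a fixed point on $D$, since a fixed point would make
$\gamma$ conjugate into the maximal compact $K$ and hence give $\gamma$
an eigenvalue that is a nontrivial root of unity, contradicting neatness
(this is the classical observation of Borel). One then needs the
slightly stronger statement that no nontrivial element of $\Gamma/\GmU$
has a fixed point on the partial compactification $(D/\GmU)'_\Sigma$;
but this is precisely what is established in the proof
of~\cite[Theorem~2.1]{Sankaran}, where it is shown that the toric strata
added at the cusps do not introduce new stabilisers for a neat lattice.
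Combining, $N=1$ and $\pi_1(\DGmStor)=\Gamma/\GmU$.

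Since this theorem is stated only to summarise prior work, I would keep
the proof to a few lines: cite~\cite[Corollary~1.6]{Sankaran} for the
surjection $\Gamma/\GmU\twoheadrightarrow\pi_1(\DGmStor)$,
cite~\cite[Theorem~2.1]{Sankaran} for the simple connectivity of
$(D/\GmU)'_\Sigma$ and the analysis of stabilisers, and invoke
Koll\'ar's result~\cite[Sect~7]{Kollar} already quoted in the
introduction to pass from the (mildly singular) $\DGmStor$ to a
resolution without changing $\pi_1$. The one place where I would be
slightly careful is the passage from fixed points on $D$ to fixed points
on $(D/\GmU)'_\Sigma$: a priori an element could act freely on $D/\GmU$
but fix a point of a boundary toric stratum. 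The main (indeed only)
obstacle is therefore to make sure that the relevant fixed-point
analysis in~\cite{Sankaran} is correctly quoted; there is no genuinely
new mathematics to carry out here, and the real work of the paper begins
with the general, non-neat, non-arithmetic case in
Theorem~\ref{thm:fundamentalgroup}.
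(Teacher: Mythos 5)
Your proposal is correct and takes essentially the same approach as the paper, which states this theorem purely as a summary citation of \cite[Corollary~1.6, Theorem~2.1]{Sankaran}; your assembly via the simply connected covering $(D/\GmU)'_\Sigma$, the properly discontinuous $\Gamma/\GmU$-action, and the Armstrong-type fixed-point criterion is exactly the mechanism the paper itself later uses to prove Theorem~\ref{thm:fundamentalgroup}. The only superfluous step is the appeal to Koll\'ar's result, since Armstrong's theorem computes $\pi_1$ of the (possibly singular) quotient $\DGmStor$ directly, but this does no harm.
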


The above is a more carefully stated version of the results
in~\cite{Sankaran}. Recall that $\GmU$ is the group generated by
$\GmU_Q$ for all $Q\in \ParGm$. In a similar style, we define $\GmL$
to be the subgroup of $\Gamma$ generated by all $\gamma \in \Gamma
\cap Q$ such that $\gamma^k\in R_Q$ for some $k\in \NN$ and some $Q
\in \ParGm$, and $\gamma^k\in U_Q$ if $\gamma^k\in N_Q$. (Another way
to say this is that $(\gamma N_Q)^k=\gamma^k N_Q$ is in
$R_Q/N_Q=A_Q \subset L_Q =Q/N_Q$, and if it is the identity then
$\gamma^k\in \Upsilon$.)

\begin{theorem}\cite[Lemma 5.2, Proposition 5.3]{GHS}\label{thm:GritsenkoHulekSankaran}
 Under the conditions of
Theorem~\ref{thm:Sankaran}, there is a commutative diagram
\[
\begin{diagram}
\node{\Gamma} \arrow{e,t}{\psi} \arrow{se,r}{\varphi} \node{\pi_1
  (D/\Gamma)}  \arrow{s}  \\
\node{\mbox{  }}   \node{\pi_1 \DGmStor}
\end{diagram}
\]
of surjective group homomorphisms, such that $\Ker\varphi$ and
$\Ker\psi$ contain all $\gamma \in \Gamma$ with a fixed point on $D$.
\end{theorem}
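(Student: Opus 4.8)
The plan is to reduce the statement to Theorem~\ref{thm:Sankaran} together with standard covering space theory. First I would recall that by Theorem~\ref{thm:Sankaran} (more precisely, by the construction in~\cite{Sankaran} recalled in Section~\ref{sect:toroidal}) there is a surjection $\Gamma/\GmU \to \pi_1(\DGmStor)$, and that the covering $(D/\GmU)'_\Sigma \to \DGmStor$ is the Galois covering with group $\Gamma/\GmU$; since $(D/\GmU)'_\Sigma$ is simply connected, this covering is universal (as an orbifold/variety covering), so $\pi_1(\DGmStor)$ is the quotient of $\Gamma/\GmU$ by the normal subgroup generated by the isotropy subgroups of the $(\Gamma/\GmU)$-action on $(D/\GmU)'_\Sigma$. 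Composing with $\Gamma \to \Gamma/\GmU$ gives the homomorphism $\varphi$, and the corresponding statement for $D/\Gamma$ (whose universal cover is the contractible $D$, with $\Gamma$ acting, the non-free locus being the fixed points of elements of $\Gamma$) gives $\psi$; the commutativity of the triangle is immediate from functoriality of $\pi_1$ applied to the open inclusion $D/\Gamma \hookrightarrow \DGmStor$.

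Next I would establish the statement about the kernels. For $\psi$ this is the classical fact (see~\cite[Sect~7]{Kollar}) that if a discrete group $\Gamma$ acts properly discontinuously on a simply connected space $D$, then $\pi_1$ of the quotient is $\Gamma$ modulo the normal closure of all isotropy groups $\Stab_\Gamma(x)$, $x \in D$; in particular every $\gamma \in \Gamma$ with a fixed point on $D$ lies in $\ker\psi$. For $\varphi$, I would argue that any $\gamma \in \Gamma$ with a fixed point $x \in D$ descends, via $\epsilon_Q$ and the maps $\pi^U_\SmQ$ of Section~\ref{subsect:coverings}, to an element $\gamma\GmU$ of $\Gamma/\GmU$ with a fixed point on $(D/\GmU)'_\Sigma$ — concretely, the image of $x$ under the embedding $Z_\SmQ \hookrightarrow (D/\GmU)'_\Sigma$ for any $Q$ with $x$ in the relevant chart is fixed by $\gamma\GmU$ because $\gamma$ fixes $x$ in $D$ and the action on $(D/\GmU)'_\Sigma$ is induced from that on $D$. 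Hence $\gamma\GmU$ lies in an isotropy group of the $(\Gamma/\GmU)$-action on $(D/\GmU)'_\Sigma$, so its image in $\pi_1(\DGmStor)$ is trivial; that is, $\gamma \in \ker\varphi$. Surjectivity of both maps is already part of Theorem~\ref{thm:Sankaran} (for $\varphi$) and clear for $\psi$ since $\Gamma = \pi_1^{\orb}(D/\Gamma)$ surjects onto $\pi_1(D/\Gamma)$.

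The main obstacle is the bookkeeping in the second step: one must check that an honest fixed point $x \in D$ of $\gamma$ genuinely produces a fixed point of $\gamma\GmU$ on the partial compactification $(D/\GmU)'_\Sigma$, rather than merely a point whose $\GmU$-orbit is preserved. This is where the compatibility of the $Q$-actions on the $Z_\SmQ$ with the gluing maps $\mu^Q_P$ (Section~\ref{subsect:gluingmaps}) and with the equivalence relation $\sim_{\GmU}$ must be used carefully: the point $x$ lies in $D \subset D/\GmU_Q \subset Z_\SmQ$ for a suitable $Q$, the action of $\gamma$ on $Z_\SmQ$ is the one extending~\eqref{quotientaction}, and since $\gamma x = x$ in $D$ and $\GmU_Q \normal \GmU$ this forces $\gamma\GmU$ to fix $\pi^U_\SmQ(x)$. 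Once this is in place, everything else is a formal consequence of Theorem~\ref{thm:Sankaran} and the theory of (orbifold) universal covers, so I expect the write-up to be short.
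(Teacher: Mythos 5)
The paper does not prove this statement at all: Theorem~\ref{thm:GritsenkoHulekSankaran} is quoted verbatim from \cite[Lemma 5.2, Proposition 5.3]{GHS} as background, so there is no internal proof to compare against. Your sketch is nevertheless essentially correct, and it is worth noting that it proves the result by the same machinery the paper deploys later for Theorem~\ref{thm:fundamentalgroup}: Armstrong's theorem applied to the properly discontinuous action of $\Gamma$ on the simply connected $D$ (giving $\psi$ with $\ker\psi\supseteq\GFix$) and to the action of $\Gamma/\GmU$ on the simply connected $(D/\GmU)'_\Sigma$ from \cite{Sankaran} (giving $\varphi$), with commutativity coming from compatibility of $D\to D/\GmU\subset(D/\GmU)'_\Sigma$ with the projections. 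By contrast, the original argument in \cite{GHS} is more elementary: $\psi(\gamma)$ and $\varphi(\gamma)$ are defined directly as the classes of projected paths from a base point $x_0$ to $\gamma x_0$, and if $\gamma$ fixes $x\in D$ one routes the path through $x$ to see the projected loop is null-homotopic; this needs no appeal to Armstrong and in particular no identification of the full kernel. Your route buys a stronger statement (the kernels are exactly the normal subgroups generated by isotropy) at the cost of invoking the hypotheses of Armstrong's theorem, which are indeed available here. Two small remarks: the ``main obstacle'' you flag is not really one --- a fixed point of $\gamma\GmU$ on $D/\GmU\subset(D/\GmU)'_\Sigma$ \emph{means} a $\GmU$-orbit preserved by $\gamma$, and an honest fixed point $x\in D$ trivially gives one; the delicate converse (replacing a fixed coset by a representative with a genuine fixed point) is only needed for Proposition~\ref{prop:interiorcompanionfixedpoints}, not here. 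Also, you should say a word about surjectivity of the vertical map $\pi_1(D/\Gamma)\to\pi_1(\DGmStor)$; it follows from commutativity together with surjectivity of $\varphi$, or from the fact that $D/\Gamma$ is the complement of an analytic subset in $\DGmStor$.
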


The following is the main result of the present paper. We use the
notation from Subsection~\ref{subsect:langlands}.

\begin{theorem}\label{thm:fundamentalgroup}
Let $D=G/K$ be an irreducible Hermitian symmetric space and let
$\Gamma$ be a non-uniform lattice in $G$. Then for any
$\Gamma$-admissible family $\Sigma$, the toroidal compactification
$\DGmStor$ has fundamental group
\[
\pi_1(\DGmStor) = \Gamma/ \GmL  \GmU.
\]
\end{theorem}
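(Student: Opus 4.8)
The plan is to realise $\pi_1(\DGmStor)$ as a quotient of $\Gamma$ in two stages: first pass to the intermediate covering $(D/\GmU)'_\Sigma$, which by the results recalled in Section~\ref{sect:toroidal} is a simply connected complex analytic space carrying a $(\Gamma/\GmU)$-action with quotient $\DGmStor$; then identify the kernel of the induced surjection $\Gamma/\GmU \to \pi_1(\DGmStor)$. For a (possibly non-free) properly discontinuous action of a group $H=\Gamma/\GmU$ on a simply connected, locally contractible space $Y=(D/\GmU)'_\Sigma$ with quotient $X$, the standard fact (see e.g.~\cite{Kollar} or Armstrong's theorem) is that $\pi_1(X) = H/N$, where $N \normal H$ is the subgroup generated by those $h\in H$ that have a fixed point on $Y$. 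So the whole theorem reduces to the assertion that this subgroup $N$ of $\Gamma/\GmU$ is exactly the image of $\GmL$, i.e.\ that $\GmL\GmU/\GmU$ is generated by the stabiliser elements.

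The inclusion "$N \subseteq \GmL\GmU/\GmU$" is precisely the content of Proposition~\ref{prop:interiorcompanionfixedpoints}, which is already announced in the introduction: any $\gamma\GmU$ with a fixed point on $(D/\GmU)'_\Sigma$ has a representative $\gamma\in\Gamma\cap L_Q$ with $\gamma^k\in\Gamma\cap A_Q$ for some $Q\in\ParGm$, hence lies in $\GmL$ by definition. So for this direction I would simply invoke that proposition. The reverse inclusion "$\GmL\GmU/\GmU \subseteq N$" requires showing that each generator $\gamma\in\Gamma\cap L_Q$ with $\gamma^k\in\Gamma\cap A_Q$ does fix a point on $(D/\GmU)'_\Sigma$. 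For this I would work in the Siegel domain realisation \eqref{SiegelDomain} attached to $Q$ and in its partial compactification $Z_\SmQ$: an element of $A_Q$ acts on the cone factor $U_Q + iC_Q$ by scaling the $iC_Q$-coordinate and trivially on $V_Q$ and $D_{Q,h}$ (by \eqref{DiffeomorphicSiegelDomainAction} with $u_0=v_0=0$), so after passing to $Z_\SmQ$ it fixes the toric boundary strata $\TT(Q)_\sigma$ corresponding to cones $\sigma$ with $A_Q$ acting trivially — concretely a point "at infinity" in the $0$-dimensional stratum. Elements of $L_Q = A_Q\rtimes M_Q$ a power of which lands in $A_Q$ are, up to the $A_Q$-part, finite-order modulo the centraliser, and one checks they still fix such a boundary point; then one uses the open embedding $\pi^U_\SmQ\colon Z_\SmQ \hookrightarrow (D/\GmU)'_\Sigma$ recalled at the end of Section~\ref{sect:toroidal} to transport this fixed point to the covering space, noting that the fixed point must also be fixed modulo $\GmU$.

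The main obstacle, I expect, is the reverse inclusion — and specifically the case of $\gamma\in\Gamma\cap L_Q$ which is \emph{not} itself in $A_Q$ but only has a power there. Such $\gamma$ has a nontrivial semisimple part in $M_Q = G'_{Q,l}\times G_{Q,h}$, acting nontrivially on $D'_{Q,l}$ and/or $D_{Q,h}\imic F(Q)$, so it need not fix any point of $D$ itself; one has genuinely to go to the boundary $Z_\SmQ$ and argue that the action on the relevant toric stratum, which involves only the $A_Q$-component through the scaling on $C_Q$, degenerates so that $\gamma$ acquires a fixed point there. Handling the compatibility under the gluing maps $\mu^Q_P$ and the equivalence relation $\sim_{\GmU}$ defining $(D/\GmU)'_\Sigma$ — so that the boundary fixed point is well-defined in the glued space and genuinely fixed by $\gamma\GmU$ rather than merely by some $\GmU$-translate — is the delicate bookkeeping step. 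Once both inclusions are in hand, Theorem~\ref{thm:fundamentalgroup} follows by combining Armstrong's theorem with $N = \GmL\GmU/\GmU$ and the third isomorphism theorem: $\pi_1(\DGmStor) = (\Gamma/\GmU)\big/(\GmL\GmU/\GmU) = \Gamma/\GmL\GmU$.
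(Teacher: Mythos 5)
Your proposal is correct and follows essentially the same route as the paper: the paper's proof of Theorem~\ref{thm:fundamentalgroup} is exactly Armstrong's theorem applied to the simply connected covering $(D/\GmU)'_\Sigma$ with its properly discontinuous $(\Gamma/\GmU)$-action, combined with the identification $(\Gamma/\GmU)^\Fix_o = \GmL\GmU/\GmU$. Note only that Proposition~\ref{prop:interiorcompanionfixedpoints} is stated as an ``if and only if'', so the reverse inclusion you sketch by hand (producing a boundary fixed point in $Z_\SmQ$ via the cone and the strata at infinity) is already part of that proposition and need not be reproved inside the proof of the theorem.
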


\begin{proof}
According to \cite{Sankaran}, $(D/\GmU)'_\Sigma$ is a path connected
simply connected locally compact topological space and $\Gamma/\GmU$
acts properly discontinuously on $(D/\GmU)'_\Sigma$ by
homeomorphisms. More precisely, $\gamma \GmU\colon (\GmU q) \mapsto
\GmU \gamma q$ defines a $\Gamma/\GmU$-action on $D/\GmU$, which
extends continuously to $(D/\GmU)'_\Sigma$. The quotient space
$(D/\GmU)'_\Sigma/(\Gamma/\GmU) = (D/\Gamma)'_\Sigma$ is the toroidal
compactification of $D/\Gamma$ associated with $\Sigma$.  By a theorem
of Armstrong~\cite{Armstrong}
\[
\pi_1 (\DGmStor) = (\Gamma/\GmU)/ (\Gamma/\GmU)^\Fix_o
\]
where $(\Gamma/\GmU)^\Fix_o$ is the subgroup of $\Gamma/\GmU$
generated by elements $\gamma \GmU$ with a fixed point on
$(D/\GmU)'_\Sigma$.

Theorem~\ref{thm:fundamentalgroup} therefore follows from
Proposition~\ref{prop:interiorcompanionfixedpoints}, which establishes
that $(\Gamma/\GmU)^\Fix_o = \GmL \GmU/\GmU$.
\end{proof}\medskip

In order to describe the action of $\Gamma/\GmU$ on $(D/\GmU)'_\Sigma$
note that the $\Gamma$-action on $\ParGm$ by conjugation determines
holomorphic maps $\gamma\colon Z_\SmQ \to Z_{\Sigma(Q^\gamma)} $ for
all $\gamma \in \Gamma$ and $Q \in \ParGm$.  Any $\gamma \in \Gamma$
transforms the $\sim_{\GmU}$-equivalence class of $z \in Z_\SmQ $ into
the $\sim_{\GmU}$-equivalence class of $\gamma z$, giving a
biholomorphic map $\gamma\colon (D/\GmU)'_\Sigma \To
(D/\GmU)'_\Sigma$.  By definition of $\sim_\GmU$, all $\gamma \in
\GmU$ act trivially on $(D/\GmU)'_\Sigma$ and the $\Gamma$-action on
$(D/\GmU)'_\Sigma$ reduces to a $(\Gamma/\GmU)$-action
\[
 (\Gamma/\GmU)  \times (D/\GmU)'_\Sigma \To (D/\GmU)'_\Sigma,
\]
given by
\begin{equation}\label{eq:GammaModUnipotentAction}
 (\gamma \GmU) \pi^U_\SmQ (z) = \pi^U_{\Sigma(Q^\gamma)}
(\gamma z)
\end{equation}
for $\gamma \GmU \in \Gamma/\GmU$ and $z \in Z_\SmQ$.

\begin{proposition}\label{prop:interiorcompanionfixedpoints}
In the notations from Theorem \ref{thm:fundamentalgroup}, a coset
$\gamma_0 \GmU \in \Gamma/\GmU$ has a fixed point on
$(D/\GmU)'_\Sigma$ if and only if for some $k \in \NN$ and some $Q \in
\ParGm$, there is a representative $\gamma \in \Gamma \cap Q $ of
$\gamma_0 \GmU = \gamma \GmU$ with $\gamma^k \in \Gamma \cap  R_Q$
and $\gamma ^k \in U_Q$ if $\gamma ^k \in N_Q$.
Hence the subgroup $(\Gamma/\GmU)^\Fix_o$ of $\Gamma/\GmU$ satisfies
$(\Gamma/\GmU)^\Fix_o = \GmL \GmU/\GmU$.
\end{proposition}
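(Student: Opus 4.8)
The plan is to analyse fixed points of $\gamma_0\GmU$ on $(D/\GmU)'_\Sigma$ by examining where such a fixed point can lie. Since $(D/\GmU)'_\Sigma=\coprod_Q Z_\SmQ/\!\sim_\GmU$ and each $Z_\SmQ$ embeds biholomorphically via $\pi^U_\SmQ$, a fixed point of $\gamma_0\GmU$ lies in the image of some $\pi^U_\SmQ$, and by \eqref{GammaModUnipotentAction} the condition $(\gamma_0\GmU)\pi^U_\SmQ(z)=\pi^U_\SmQ(z)$ forces $\pi^U_{\Sigma(Q^\gamma)}(\gamma z)=\pi^U_\SmQ(z)$ for a representative $\gamma$. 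First I would use the disjointness of the images of distinct $\pi^U_\SmQ$ (or rather the structure of $\sim_\GmU$ along boundary strata) to argue that after replacing $Q$ by a smaller parabolic whose cone $\sigma\in\SmQ$ corresponds to the boundary stratum of $Z_\SmQ$ through $z$, we may take $Q^\gamma=Q$, i.e.\ $\gamma\in\Gamma$ normalises $Q$. Since $Q$ is parabolic it is self-normalising, so $\gamma\in\Gamma\cap Q$. Thus every coset with a fixed point has a representative in $\Gamma\cap Q$ for some $Q\in\ParGm$, fixing a point $z$ in the closed stratum $\TT(Q)_\sigma\times V_Q\times D_{Q,h}$ of $Z_\SmQ$ (for suitable $\sigma$, possibly $\sigma=\{0\}$, in which case $z\in D/\GmU_Q$ already).

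Next I would pin down which $\gamma\in\Gamma\cap Q$ actually fix such a point. Write $\gamma=(u_0,v_0,a_0,g_0',g_0)$ in the refined Langlands decomposition and use the explicit action \eqref{quotientaction}, extended by continuity to $Z_\SmQ$, on a point $z$ lying over the boundary stratum indexed by $\sigma$. On the $D_{Q,h}$-factor the action is by $g_0$, which must then fix $\zeta\in D_{Q,h}$; on the $V_Q$-factor the action is the affine map $v\mapsto \alpha_0^{-1}v_0\alpha_0+v$ composed with the linear part coming from $\alpha_0$ acting on $V_Q$; and on the toric part the relevant constraint, for a point in the stratum $\TT(Q)_\sigma$, is that $\alpha_0$ preserves $\sigma$ and acts trivially modulo $\GmU_Q$ on the quotient torus $\TT(Q)/\TT(Q)_\sigma$-complement — which is exactly the content that forces $g_0'=1$ up to $\Gamma\cap K$ and $a_0$ to act trivially in a way that, combined with properness of the $\Gamma_{Q,l}$-action on the cone, yields that $\gamma$ lies in $L_Q=A_QM_Q$ modulo $\GmU_Q$ and that a power $\gamma^k$ lies in $A_Q$ (the $g_0$-part being elliptic with a fixed point in $D_{Q,h}$, hence of finite order after we pass to a power, and the $M_Q$-parts being killed). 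I would carry this out by combining \eqref{HorosphericalAction}–\eqref{DiffeomorphicSiegelDomainAction} with the toric-stratum description in Section~\ref{subsect:partcompact}, reducing to the statement that an element of $\Gamma\cap Q$ fixing a boundary point of $Z_\SmQ$ must, modulo $\GmU_Q\subset\GmU$, normalise $\sigma$ and act on $D'_{Q,l}$ and $D_{Q,h}$ with fixed points; ellipticity plus discreteness then gives the power condition. This shows $\gamma\GmU\in\GmL\GmU/\GmU$, proving one inclusion $(\Gamma/\GmU)^\Fix_o\subseteq\GmL\GmU/\GmU$.

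Conversely I would show each generator of $\GmL$ does have a fixed point on $(D/\GmU)'_\Sigma$. Take $\gamma\in\Gamma\cap L_Q$ with $\gamma^k\in\Gamma\cap A_Q$. Then $\gamma$ acts on $Z_\SmQ$ fixing $Q$ (as $\gamma\in L_Q\subset Q$), and I would exhibit an explicit fixed point in the closed boundary stratum of $Z_\SmQ$ corresponding to a cone $\sigma\in\SmQ$ that is $\gamma$-invariant: since $\gamma^k\in A_Q$ acts on $C_Q\subset U_Q$ and $\SmQ$ decomposes $C_Q$ with only finitely many $\Gamma_{Q,l}$-orbits, a standard argument produces a $\gamma$-stable cone $\sigma$ (after possibly refining, but admissibility is preserved), on whose associated torus orbit $\gamma$ acts with a fixed point — concretely, the image under $\bde_Q$ of an $A_Q$-limit point in the direction of $\sigma$, which is fixed because the $A_Q$-action on that limiting stratum is trivial in the relevant coordinates. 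Using \eqref{quotientaction} at such a limit point shows $\gamma$ fixes $\pi^U_\SmQ(z)$; hence $\gamma\GmU\in(\Gamma/\GmU)^\Fix_o$. Since $\GmU_Q\subset\GmU$ always acts trivially, $\GmU\subseteq(\Gamma/\GmU)^\Fix_o$ trivially, and the two inclusions give $(\Gamma/\GmU)^\Fix_o=\GmL\GmU/\GmU$.

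The main obstacle I anticipate is the first step: carefully controlling the equivalence relation $\sim_\GmU$ near boundary strata so as to reduce a general fixed point to one in a single $Z_\SmQ$ fixed by a representative in $\Gamma\cap Q$, and then extracting from the continuity extension of \eqref{quotientaction} the precise claim that such a representative lies in $L_Q$ modulo $\GmU$ with a power in $A_Q$. This requires knowing exactly how $\gamma\in\Gamma\cap Q$ acts on the toric boundary $\TT(Q)_\sigma$ and on the $V_Q$-affine part, and ruling out unipotent ($V_Q$-translation) and non-trivial $M_Q$-components by a fixed-point/discreteness argument; the neat arithmetic case in \cite{Sankaran} avoids much of this because there torsion and the $\GmL$-phenomenon are absent.
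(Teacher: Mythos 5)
Your overall architecture matches the paper's: reduce to a representative with an honest fixed point in a single $Z_\SmQ$, deduce $\gamma\in\Gamma\cap Q$ from self-normalisation, use the explicit action to remove the $N_Q$-component modulo $\GmU_Q$, and prove the converse by exhibiting a boundary fixed point. But the two steps you yourself flag as ``the main obstacle'' are exactly where the content lies, and your substitutes for them do not work. In the ``only if'' direction, from a fixed point in the stratum $\TT(Q)_\sigma$ you correctly get that the Levi part $\alpha_1=(a_1,g_1',g_1)$ stabilises $\sigma$, but your conclusion that this ``forces $g_0'=1$ up to $\Gamma\cap K$ and $a_0$ to act trivially'', with the power condition following from ``ellipticity plus discreteness'' and ``properness of the $\Gamma_{Q,l}$-action'', is not an argument and is not even the right statement: $g_1'$ need not be trivial and $a_1$ does not act trivially; what must be shown is precisely that some power of $(a_1,g_1')$ lands in $A_Q$. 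The paper's mechanism is: choose $Q$ with $\dim F(Q)$ maximal (\cite[Lemma~III.5.5]{AMRT}) so that $\sigma\cap C_Q\neq\emptyset$ by \cite[Theorem~III.4.8(ii)]{AMRT}; then $\gamma$ permutes the finitely many top-dimensional cones having $\sigma$ as a face, so a power stabilises one, a further power fixes each of its rays, hence fixes a rational basis of $U_Q$ up to scalars; the identity component of the subgroup of $Q$ fixing such a basis up to scalars is a $\QQ$-split torus, contained in the maximal $\RR$-split torus $A_Q$, whence $\gamma^k\in A_Q$. Nothing in your sketch supplies this, and without it the inclusion $(\Gamma/\GmU)^\Fix_o\subseteq\GmL\GmU/\GmU$ is unproved. (Minor: in \eqref{quotientaction} the $V_Q$-coordinate of the point is not acted on linearly, so the fixed-point condition simply forces $v_1=0$; your ``affine map composed with the linear part'' misreads the formula, though harmlessly.)

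In the ``if'' direction your plan is to produce a $\gamma$-stable cone $\sigma\in\SmQ$ by a ``standard argument'' from finiteness of $\Gamma_{Q,l}$-orbits, ``after possibly refining'' the fan. This fails on two counts: $\Sigma$ is given and the statement concerns the fixed space $(D/\GmU)'_\Sigma$, so you are not allowed to refine; and finiteness of orbits does not produce a $\gamma$-stable cone whose relative interior meets the support of the fan. The paper instead applies the Brouwer fixed point theorem to get a $\gamma$-invariant ray $\rho'$ in $\Bar{C_Q}$, and then the real work is an induction on $\dim C_Q$ using the eigenspace $H_\lambda$ of $a=\gamma^k$ and \cite[Corollary~II.3.22]{AMRT} to replace $\rho'$ by an invariant ray in the \emph{open} cone $C_P$ of a $\Gamma$-rational boundary component. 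Rationality is exactly what guarantees that the limit point $\bde_Q(0+i\infty_\rho)$ exists in $Y_\SmQ$ (the fan need only cover the rational part of $\Bar{C_Q}$), and invariance of the ray --- not of a cone of $\SmQ$ --- is what makes that limit point fixed, since $\gamma$ merely rescales along $\rho$. This rationality issue is invisible in your sketch and is the crux of the converse; as written, the proposal has genuine gaps at both decisive points.
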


\begin{proof}
We first prove the ``only if'' part of the statement.

We claim that if $\gamma_0 \GmU \in \Gamma/\GmU$ has a fixed point on
\[
  (D/\GmU)'_\Sigma = \left(\coprod_{P \in \ParGm} Z_\SmP\right)/\sim_{\GmU}
\]
then there exist $\gamma \in \gamma_0 \GmU$ and $y \in Z_\SmQ$ for
some $Q \in \ParGm$, such that $\gamma y = y$. That is, if a coset
of $\GmU$ has a fixed point mod~$\GmU$ then some representative of
that coset has a fixed point ``on the nose''.

To prove this, notice that if $z_0 \sim_\GmU \gamma_0 z_0$ for some
$z_0 \in Z_\SmP$, then there exist $Q_1 \in \ParGm$, $z_1 \in
Z_{\Sigma(Q_1)}$ and $u_1 \in \GmU$ such that $F(P) \subseteq
\Bar{F(Q_1)}$ and $\mu^{Q_1}_P (z_1) = z_0$, and $F(P^{u_1\gamma_0})
\subseteq \Bar{F(Q_1)}$ and $\mu^{Q_1}_{P^{u_1\gamma_0}}(z_1) = u_1
\gamma_0 z_0$.

If $F(Q_1)=F(P)$, then $Q_1 =P$ and in~\eqref{eq:GammaModUnipotentAction}
we have $\mu^{Q_1}_{P^{u_1 \gamma_0}} = \mu^{Q_1}_P = \id_{Z_\SmP}$
and $z_0 = u_1 \gamma_0 z_0$. Since $\GmU$ is a normal subgroup of
$\Gamma$, we may take $\gamma = u_1 \gamma_0 \in \GmU \gamma_0 =
\gamma_0 \GmU$. In particular, this shows that the claim is true if
$F(P)$ is of maximal dimension.

Now we conclude the proof of the claim by induction on $\codim F(P)$:
suppose that it holds for all $P'\in\ParGm$ with $\dim
F(P')>\dim F(P)$, and take $Q_1$ as above. If $F(Q_1)=F(P)$ we are
done. If not, then $z_0 \sim_\GmU z_1$ and $\gamma_0 z_0 \sim_\GmU
\gamma_0 z_1$, because $\mu^{Q_1^{\gamma_0}}_{P^{\gamma_0}} (\gamma_0
z_1) = \gamma_0 z_0$. On the other hand, $\gamma_0 z_0 \sim_\GmU z_1$,
so $z_1 \sim_\GmU \gamma_0 z_1$ because $\sim_\GmU$ is an equivalence
relation. Thus $\gamma_0\GmU$ has the fixed point $z_1\in
Z_{\Sigma(Q_1)}$ so the claim follows by taking $P'=Q_1$.

Suppose then that $\gamma \in \Gamma$ has a fixed point $y \in
Z_\SmQ$ for some $Q \in \ParGm$.  Then $y = \gamma y \in
Z_{\Sigma(Q^{\gamma})}$ implies that $Q^{\gamma} = Q$; but the
parabolic subgroup $Q$ of $G$ coincides with its normaliser in $G$, so
$\gamma \in Q$.  We may therefore use the Langlands decomposition of
$Q$ and write
\[
\gamma = (\exp(u+\eta(v)), a, g, h) \in N_Q \rtimes (A_Q \times G'_{Q,l} \times G_{Q,h}).
\]
As above we take
\[
l = (a, g, h) \in L_Q = A_Q \times G'_{Q,l} \times G_{Q,h}.
\]
Any element of $X_\SmQ$ may be written as a limit of elements of
$\TT(Q)$, that is, as $\lim\limits_{t\to\infty}(\bde_Q(u_t+ix_t))$, and if the
element is in $Y_\SmQ$ then we may take $x_t\in C_Q$. So
\[
y = (\lim_{t\to\infty}\bde_Q(u_t+ix_t), v_y, z_y) \in Y_\SmQ
\times V_Q \times D_{Q,h}.
\]
Then by~\eqref{eq:quotientaction} and the continuity of the $Q$-action on
$(D/\GmU_Q)_\SmQ$, the condition $\gamma y=y$ is equivalent to
\begin{equation*}
\lim\limits_{t \to \infty} \bde_Q\Big(u+u_t^{ag}+\half[\eta(v),\eta(v_y^l)]+
 i(a,g)x_t \Big)  = \lim_{t\to\infty}\bde_Q(u_t+ix_t),
\end{equation*}
together with
\begin{equation*}
 v+v_y^l = v_y \text{ and }  h z_y  = z_y.
\end{equation*}
If $\gamma$ has a fixed point $y$ and $y \in D/\GmU_Q$ then $\gamma$
belongs to the compact stabiliser of $y$ in the isometry group $G$ of
$D$ and hence $\gamma$ is torsion. If $y \in Z_\SmQ \setminus (D
/\GmU_Q)$ we need to look at $g$ and $h$. For $h$ what we need is
immediate: it is in the stabiliser of $z_y\in D_{Q,h}$ and isotropy
groups in symmetric spaces are always torsion, so $h$ is of finite
order: by replacing $\gamma$ with a power we may assume that $h$ is
the identity.

For the Riemannian part $g$ a little more work is needed. The Levi
component $l=(a,g,\Id)$ of the element $\gamma$ has a fixed point $y'=
\lim\limits_{t \to \infty} {\bf e}_Q(u_t+ix_t)\in X_\SmQ \setminus
\TT(Q)$, so $y'\in \TT(Q)_\sigma$ for some unique $\sigma \in
\SmQ$. Therefore $l$ preserves $\sigma$. If we assume, as we may do,
that $Q$ has been chosen so as to maximise $\dim F(Q)$
(see~\cite[Lemma~III.5.5]{AMRT}), then $\sigma\cap C_Q\neq \emptyset$
(remember that $C_Q$ is an open cone but $\sigma$ is closed): this
follows from \cite[Theorem~III.4.8(ii)]{AMRT}.

Since $l$ preserves $\sigma$, it permutes the top-dimensional
cones of which $\sigma$ is a face: there are finitely many of these as
long as $\sigma\cap C_Q \neq \emptyset$. Therefore some power of
$l$ preserves a top-dimensional cone, so we may as well assume
that $\sigma$ is top-dimensional. The action of $l$ is thus
determined by its action on $\sigma = \sum_{i=1}^q \RR_{\ge 0}u_i$.

Thus $\gamma$ permutes the rays $\RR_{\ge 0}u_i$ (it may not fix them
pointwise) and therefore some power, in fact $l^{q!}$, fixes all
the rays, so we may as well assume that $l$ fixes all the
rays. In particular it fixes a rational basis of $U_Q$ up to
scalars. Now consider the real subgroup of $G_{Q,l}$ that fixes that basis
up to scalars. Its identity component is a torus, and because the
$u_i$ are defined over $\QQ$ it is $\RR$-split (in fact $\QQ$-split)
and therefore it is contained in the maximal $\RR$-split torus in $G_{Q,l}$,
which is $A_Q$. So some power of $l$ is in $A_Q$, and some power of
$\gamma$ is in $N_Q\rtimes A_Q=R_Q$.

If $\gamma^k\in N_Q$ then $\gamma^k=\exp(u'+\eta(v'))$ and comparing
the $V_Q$ parts in $\gamma^ky=y$ yields $v'+v=v$. Therefore $v'=0$ and
$\gamma^k=\exp(u')\in U_Q$. This completes the proof of the ``only
if'' part.

For the converse (the ``if'' part), for $\gamma\in \Gamma\cap Q$ we write
\[
\gamma = (\exp(u+\eta(v)),a,g,h) \in Q  = N_Q \rtimes (A_Q \times
G'_{Q,l} \times G_{Q,h})
\]
and as usual we write $l=(a,g,h)\in L_Q$. Suppose first of all that
$\gamma\in \Lambda$ and that $l^k\in A_Q$ for some $Q \in
\ParGm$. Since $\gamma\in Q$ it preserves the cone $C = C_Q$. By the
Brouwer fixed point theorem, $\gamma$ preserves a ray $\rho'$ in $\Bar
C_Q$.

We claim that there exists a boundary component $F(P)$, for some
$P\in\ParGm$, fixed by $\gamma$ such that $\gamma$ preserves a ray
$\rho=\RR_{\ge 0}u_\rho$ in the relative interior of $C_P$. This is trivial if
$\dim C_Q = 1$. We shall proceed by induction on $\dim C_Q$.

The ray $\rho'$ is preserved by $\gamma$, with eigenvalue $\lambda$
say, and $\rho'$ belongs to a unique real boundary component $C'$ of
$C$, since $\Bar C$ is the disjoint union of its real boundary
components by~\cite[Proposition II.3.1]{AMRT}. Let $H_\lambda$ be the
$\lambda$-eigenspace of $\gamma$ in $U_Q$. Then $H_\lambda\cap \Bar C$
is a boundary component of $C$ and contains $\rho'$, so $H_\lambda\cap
\Bar C = C'$. Thus the normaliser of $C'$ in $\Aut(C)$ is rational
because it is the normaliser of the rational linear subspace
$H_\lambda$. Therefore by \cite[Corollary II.3.22]{AMRT}, $C'$ is a
rational boundary component. But $\gamma$ preserves $C'$, and $\dim
C'<\dim C$. In particular $\gamma\in P$, so we may assume that such a
fixed ray $\rho$ exists already in $C_Q$, generated by an element
$u_\rho\in U_Q$.

We claim that if $\gamma\in \Gamma$ and $\gamma^k \in (R_Q \setminus
N_Q) \cup U_Q$ for some $Q \in \ParGm$ and $k \in \NN$, then $\gamma$
has a fixed point
\[
y_1 = (\lim\limits_{t \to \infty} \bde_Q(u_t + ix_t),v_1,z_1) \in Y_{\Sigma (Q)} \times V_Q \times D_{Q,h}  = Z_{\Sigma (Q)}.
\]
From the group law \eqref{eq:GroupLawQ} we have
$$
\gamma ^k =  \Big(\exp\Big(u+\eta\big(\sum\limits_{j=0}^{k-1} v^{l^j} \big) \Big), a^k, g^k, h^k \Big).
$$
for an appropriate $u \in \Lie(U_Q)$.

The torsion element $h \in G_{Q,h}$ has a fixed point $z_1 \in
D_{1,h}(Q)$. Moreover the point
$o_\rho=\lim\limits_{t\to\infty}\bde_Q(u_t+it u_\rho) \in X_{\Sigma
  (Q)}$ is fixed by $\gamma$, and
\[
 \gamma(o _\rho,v_1,z_1) = (o_\rho, v+ v_1^{l},z_1) =
 l(o_\rho,v_1,z_1)
\]
for any $v_1\in \Lie(V_Q)$. So it is enough to show that there exists
$v_1 \in \Lie(V_Q)$ with
\begin{equation}\label{eq:GammaFixedPoint}
v+v_1^{l} = v_1
\end{equation}
if $l^k = a^k \in A_Q \setminus \{ 1 \}$ or $\gamma^k \in U_Q$.  With
a slight abuse of notation, we identify the split component $A_Q$ of
$Q$ with $(\RR_{>0},.)$ and recall that it acts on $\Lie(V_Q)$ by
scalar multiplication, $(a,v) \mapsto av$.

If $\gamma$ fixes $y_1$ then so does $\gamma^k$, so if $v_1$ satisfies
\eqref{eq:GammaFixedPoint} then it also satisfies
\begin{equation}   \label{eq:GammaKFixedPoint}
  \sum\limits_{j=0}^{k-1} v^{l^j} + a^k v_1 = v_1.
\end{equation}
In the case of $a^k \in A_Q \setminus \{ 1 \}$ we may simply take
$v_1= \frac{1}{1 - a^k} \sum\limits_{j=0}^{k-1} v^{l^j}$: it is
straightforward to verify that this does satisfy \eqref{eq:GammaFixedPoint}.

If $\gamma^k \in U_Q$ then clearly $\gamma^k(o_\rho,v_1,z_1) =
(o_\rho,v_1,z_1)$ for any $v_1 \in \Lie(V_Q)$.  To show that $\gamma$
itself has a fixed point, we note that the adjoint action of $L_Q$ on
$\Lie(V_Q)\imic \CC^n$, given by $l\colon v\mapsto \Ad(l)(x) =
lvl^{-1}$, is $\RR$-linear. Since $\Ad(l)^k = \Id_{\Lie(V_Q)}$, both
$\Ad(l)$ and $\cL = \Id_{\Lie(V_Q)} - \Ad(l)$ are semi-simple.  Thus
$\Ker(\cL) \cap \Im(\cL) = \{ 0 \}$ and $\Lie(V_Q) = \Ker(\cL) \oplus
\Im(\cL)$.  Note that \eqref{eq:GammaFixedPoint} is equivalent to $v =
\cL(v_1)$, so it suffices to prove that $v \in \Im(\cL)$.  Since
$\gamma^k \in U_Q$, {i.e.} its $V_Q$ component vanishes, we have
\begin{equation}    \label{eq:VanishingVQComponent}
\sum\limits_{j=0}^{k-1}  \Ad (l^j)(v) =0.
\end{equation}
We decompose $v = v' + v''$ into $v' \in \Ker(\cL)$ and $v'' \in \Im
(\cL)$: then $0 = \cL (v') = v' -\Ad(l)(v')$ implies $\Ad(l^j)(v') =
v'$ for all $j \geq 0$. Hence
\begin{eqnarray*}
\Im (\cL) \ni \sum\limits_{j=0}^{k-1} \Ad(l^j) (v'')
&=& \sum\limits_{j=0}^{k-1} \Ad (l^j) (v' + v'') - \sum\limits_{j=0}^{k-1} \Ad (l^j) (v') \\
&=& \sum\limits_{j=0}^{k-1} \Ad (l^j) (v) - k v' =  - k v' \in \Ker (\cL),
\end{eqnarray*}
making use of \eqref{eq:VanishingVQComponent}.
Therefore $v' =0$ and $v = v'' \in \Im(\cL)$.

This concludes  the proof of Proposition~\ref{prop:interiorcompanionfixedpoints}.
\end{proof}

\subsection{The first Betti number}\label{subsect:H1}

We can use Theorem~\ref{thm:fundamentalgroup} to give bounds on the
first Betti number of the toroidal compactifications.

\begin{corollary}\label{eq:FirstHomologyGroup}
Suppose that $D= G/K$ is an irreducible  Hermitian symmetric space of non-compact
type with $\dim_\CC (D) > 1$ and $\Gamma$ is a non-uniform
lattice of $G$. Let $Q_1, \ldots, Q_h$ be a complete set of representatives of
the $\Gamma$-conjugacy classes of $\Gamma$-rational maximal
parabolic subgroups of $G$, with solvable radicals $R_{Q_j}$. Then
\begin{equation}\label{eq:RankInequalityHomologyGroups}
\rk_\ZZ H_1 (D/\Gamma,\ZZ) - \sum\limits_{j=1}^h \dim_\RR(R_{Q_j}) \leq \rk_\ZZ H_1(\DGmStor,\ZZ)\leq
\rk_\ZZ H_1 (D/\Gamma,\ZZ).
\end{equation}
If $\Gamma$ is neat then
\[
\rk_\ZZ H_1 (\DGmStor, \ZZ) = \rk_\ZZ H_1 (D/\Gamma , \ZZ).
\]
\end{corollary}

\begin{proof}
For an arbitrary group $\fG$ we denote by $\ab\fG$ its abelianisation
$\ab (\fG) = \fG/[\fG, \fG]$. If $S$ is a complex analytic space then
$H_1 (S, \ZZ) = \ab \pi_1 (S)$.

If $\fH$ is a normal subgroup of $\fG$ then $[\fG/\fH, \fG/\fH] =
[\fG, \fG] \fH/\fH$ and
\[
\ab (\fG/\fH) = (\fG/\fH)/([\fG , \fG] \fH/\fH) \imic \fG/[\fG, \fG]
\fH = \fG/\fH [\fG, \fG].
\]
Therefore by Theorem~\ref{thm:fundamentalgroup}
\[
H_1 (\DGmStor, \ZZ) \imic \ab (\Gamma/\GmU \GmL) \imic
  \Gamma/\GmL\GmU \comGm.
\]
On the other hand, $D$ is a path connected, simply connected locally
compact space with a properly discontinuous action of $\Gamma$ by
homeomorphisms.  Let $\GFix$ be the subgroup of $\Gamma$ generated by
the elements $\gamma \in \Gamma$ with a fixed point on $D$.
By~\cite{Armstrong}, the fundamental group of $D/\Gamma$ is
$\pi_1(D/\Gamma) = \Gamma/\GFix$.  Therefore $H_1 (D/\Gamma , \ZZ)
\imic \ab (\Gamma/\GFix) \imic \Gamma/\GFix \comGm $ and
\[
  H_1 (\DGmStor, \ZZ) \imic
  (\Gamma/\GFix \comGm)/(\GmL\GmU  \comGm/\GFix \comGm) \imic
  H_1 (D/\Gamma, \ZZ)/F
\]
for the abelian group
\[
  F = \GmL\GmU \comGm/\GFix \comGm <
  \Gamma/\GFix \comGm \imic H_1 (D/\Gamma , \ZZ).
\]
In particular,
\[
  \rk_\ZZ H_1 (D/\Gamma, \ZZ) = \rk_\ZZ H_1 (\DGmStor, \ZZ) + \rk_\ZZ F.
\]
To verify~\eqref{eq:RankInequalityHomologyGroups}, it suffices to show
that $F_o = \GFix\GmU \comGm /\GFix \comGm $ is a finite subgroup of
$F$ and that $\rk_{\ZZ} (F) = \rk_{\ZZ} (F/F_o) \leq
\sum_{j=1}^h\dim_\RR (R_{Q_j})$.

We check the rank condition first. For any $Q\in\ParGm$ we define
$\Lambda_Q$ to be the subgroup of $\Gamma$ generated by all
$\gamma \in \Gamma \cap  Q$ such that $\gamma^k\in R_Q$ and
$\gamma^k\in U_Q$ if $\gamma^k\in N_Q$ for some $k \in \NN$. We define
$\Lambda^R_Q$ to be the subgroup of $\Gamma \cap R_Q$ generated by
such $k$-th powers.

Consider the finitely generated abelian group $\fL_Q = \Lambda_Q \GmU
\comGm/\GFix \GmU \comGm$ and the subgroup $\fA_Q = \Lambda^R_Q \GmU
\comGm/\GFix \GmU \comGm$. These both have the same rank, because the
quotient $\fL_Q/\fA_Q$ is abelian and generated by torsion elements so
it is finite.

Now $F/F_o$ is generated by the $\fL_{Q_i}$, and
therefore
\[
\rk_\ZZ (F) = \rk_\ZZ (F/F_o) \leq \sum\limits_{i=1}^h \rk_\ZZ (\fL_{Q_i}) =
\sum\limits_{i=1}^h \rk_\ZZ (\fA_{Q_i}).
\]
However, $\fA_Q$ is a discrete subgroup of $R_Q$, so
$\rk_\ZZ\fA_Q<\rk_\RR R_Q$, and this gives the bound
on $\rk_\ZZ(F)$.

It remains to show that $F_o$ is finite. It is certainly finitely
generated, because it is generated by all
$\GmU_Q\comGm\GFix/\comGm\GFix$; but each $\GmU_Q$ is finitely
generated and $\GmU_{Q^\gamma}=\GmU_Q$ so we need only the
$\GmU_{Q_i}$.

Since $F_o$ is abelian, it is now enough to show that any element of
$F_o$ is of finite order. For $Q\in\ParGm$, consider the group $\Nu_Q
= \Gamma \cap N_Q$: we have
\[
[\Nu_Q, \Nu_Q] \leq \Gamma \cap [N_Q, N_Q] = \Gamma \cap U_Q = \GmU_Q.
\]
It suffices to prove that $\Span_\RR [\Nu_Q, \Nu_Q] = U_Q$, because
then $[\Nu_Q, \Nu_Q]$ is of finite index in $\GmU_Q$ so
$\GmU_Q/[\Nu_Q,\Nu_Q]$ is finite, of exponent $k_Q$ say. Then
$(\GmU_Q)^{k_Q}\leq [\Nu_Q,\Nu_Q]\leq \comGm$ and any element of
$\GmU_Q \GFix \comGm/\GFix \comGm $ is of order dividing $k_Q$.

To prove $\Span_\RR [\Nu_Q, \Nu_Q] = U_Q$, note that the group $N_Q$
is $2$-step nilpotent, so $[\Nu_Q,\Nu_Q]\subset U_Q$ and hence
$\Span_\RR [\Nu_Q, \Nu_Q] \subseteq U_Q$. For the other inclusion, let
$\beta_1, \ldots , \beta_{m + 2n} \in \Lie(N_Q)$ be such that $b_j =
\exp (\beta_j) \in N_Q$ generate the lattice $\Nu_Q$.  Then $\Lie(N_Q)
= \Span_\RR (\beta_1, \ldots , \beta_{m + 2n})$ and
\[
 \Lie(U_Q) = [\Lie(N_Q), \Lie(N_Q)] = \Span_\RR (\{[\beta_i,
   \beta_j]\}).
\]
Here $[\beta_i,\beta_j]$ is the Lie bracket, but $U_Q$ is isomorphic
to $\Lie(U_Q)$ via $\exp$, so that $\exp[\beta_i, \beta_j] = [\exp
  (\beta_i), \exp (\beta_j)] = [b_i, b_j]$. Thus
\[
U_Q = \Span_\RR (\{[b_i, b_j]\})\subseteq \Span_\RR[\Nu_Q,\Nu_Q],
\]
as required.

For neat $\Gamma$, it is shown in \cite{Sankaran} that
$(\Gamma/\GmU)^\Fix_o$ is trivial.  Combining this with
$(\Gamma/\GmU)^\Fix_o = \GmL \GmU/\GmU$ from
Lemma~\ref{prop:interiorcompanionfixedpoints}, one concludes that
$\GmL \subseteq \GmU$.  Therefore $F = F_o$ and $\rk_\ZZ (F) =0$.
\end{proof}

In the case of a classical irreducible Hermitian symmetric space $D =
G/K$ of non-compact type, Wolf's book \cite{Wolf} provides matrix
realisations of the maximal parabolic subgroups $Q$ of $G$ and their
solvable radicals $R_Q$.  These allow an explicit calculation of $\dim
_{\RR} (R_Q)$, depending on the parameters of $G$ and the parameters
of the associated complex analytic boundary component $F(Q)$ of $Q$.

\end{document}